\DeclareMathOperator*{\Ker}{Ker}
\DeclareMathOperator*{\R}{Re}
\DeclareMathOperator*{\diag}{diag}
\newcommand{\inv}{^{-1}}
\renewcommand{\>}{\rangle}
\newcommand{\dd}{\mathrm{d}}
\newcommand{\B}{\mathcal{B}}
\newcommand{\RR}{\mathbb{R}}
\newcommand{\C}{\mathbb{C}}
\newcommand{\T}{(T(t))_{t\ge 0}}
\newcommand{\ep}{\varepsilon}
\newcommand{\gb}{\beta}
\newcommand{\gl}{\lambda}
\newcommand{\norm}[1]{\|#1\|}
\newcommand{\abs}[1]{|#1|}
\newtheorem{thm}{Theorem}[section]
\newtheorem{prp}[thm]{Proposition}
\newtheorem{lem}[thm]{Lemma}
\theoremstyle{definition}
\newtheorem{rem}[thm]{Remark}
\numberwithin{equation}{section}
\begin{document}

\title[Polynomial stability of a wave-heat network]{Polynomial stability of a \\coupled wave-heat network}

\author[L.~Paunonen]{Lassi Paunonen}
\address[L.~Paunonen]{Mathematics Research Centre, Tampere University, P.O.~ Box 692, 33101 Tampere, Finland}
 \email{lassi.paunonen@tuni.fi}

\author[D. Seifert]{David Seifert}
\address[D. Seifert]{School of Mathematics, Statistics and Physics, Newcastle University, Herschel Building, Newcastle upon Tyne, NE1 7RU, UK}
\email{david.seifert@ncl.ac.uk}

\begin{abstract}
We study the long-time asymptotic behaviour of a topologically non-trivial network of wave and heat equations. By analysing  the simpler wave and the heat networks separately, and then applying  recent results for abstract coupled systems, we establish energy decay at the rate $t^{-4}$ as $t\to\infty$ for all classical solutions.
\end{abstract}

\subjclass{
35R02, 
 35M30,  
35L05, 
35B35 
(47D06, 
34G10)
}
\keywords{
Wave-heat network, 
energy decay,
boundary nodes,
operator semigroups,
stability, 
resolvent estimates}

 \thanks{The authors gratefully acknowledge financial support from the Research Council of Finland grant number 349002  and COST Action CA18232.}

\maketitle

\section{Introduction}\label{sec:intro}

In this paper we study the long-time asymptotic behaviour of  the system of coupled one-dimensional wave and heat equations shown in~Figure~\ref{fig:WHnetworkMultiHeat}. 

\begin{figure}[h!]
\begin{center}
\includegraphics[width=0.75\linewidth]{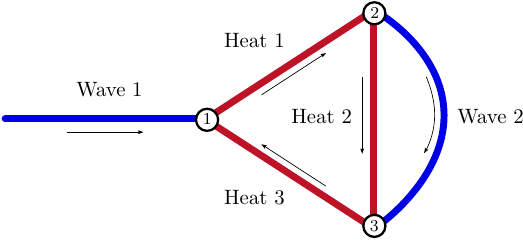}
\end{center}
\caption{A wave-heat network with heat parts (red) and wave parts (blue).
The arrows indicate the directions in which the spatial coordinates increase.
}
\label{fig:WHnetworkMultiHeat}
\end{figure}

Along each of the blue edges we solve the wave equation $y_{tt}^k(x,t)=y_{xx}^k(x,t)$, for $x\in(0,1)$, $t>0$ and $k=1,2$, and along each of the red edges we solve the heat equation $w_{t}^k(x,t)=\beta _kw_{xx}^k(x,t)$, for $x\in(0,1)$, $t>0$ and $k=1,2,3$, where $\beta_1,\beta_2,\beta_3>0$ are  constants. We impose the boundary condition $ y_t^1(0,t)=0$, together with the Kirchhoff-type coupling conditions 
$$
\begin{cases}
 y_t^1(1,t)=w^1(0,t) = w^3(1,t)  \\
 y_x^1(1,t)-\beta_1 w_x^1(0,t) + \beta_3 w_x^3(1,t) =0\\
\end{cases}
$$
at vertex 1,
$$\begin{cases}
w^1(1,t) = w^2(0,t) =  y_t^2(0,t),  \\
\beta_1 w_x^1(1,t) - \beta_2 w_x^2(0,t) - y_x^2(0,t) =0\\
\end{cases}
$$
at vertex 2, and 
$$
\begin{cases}
w^2(1,t) = w^3(0,t) =  y_t^2(1,t),  \\
\beta_2 w_x^2(1,t) - \beta_3 w_x^3(0,t) + y_x^2(1,t)=0 \\
\end{cases}
$$
at vertex 3, in all cases for $t>0$. We study the total energy of the wave-heat system, defined as
$$E(t)=\frac12\sum_{k=1}^2\int_0^1|y^k_x(x,t)|^2+|y^k_t(x,t)|^2\,\dd x+\frac12\sum_{k=1}^3\int_0^1|w^k(x,t)|^2\,\dd x$$ 
for $t\ge0$, with the aim of establishing decay of this energy as $t\to\infty$  for initial data leading to either a mild or a  classical solution of the corresponding abstract Cauchy problem. The following is our main result.

\begin{thm}
\label{thm:WaveMultiHeat}
The energy of every solution of the wave-heat system satisfies $E(t)\to0$ as $t\to\infty$, and for classical solutions we have $E(t)=o(t^{-4})$ as $t\to\infty$.
\end{thm}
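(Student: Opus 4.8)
The plan is to realise the wave--heat system as an abstract Cauchy problem $\dot z(t)=Az(t)$, $z(0)=z_0$, on the Hilbert space of states $z=(p^1,q^1,p^2,q^2,w^1,w^2,w^3)$ with $p^k,q^k,w^k\in L^2(0,1)$, where $p^k$ plays the role of $y^k_x$ and $q^k$ that of $y^k_t$, normed so that $\norm{z}^2=2E$. Using the strain--velocity variables rather than the displacements $y^k$ makes $\norm\cdot$ a genuine norm and, as one verifies along the way, ensures that $0\in\rho(A)$. Here $A(p^k,q^k,w^k)=\bigl((q^k)',(p^k)',\gb_k(w^k)''\bigr)$, with domain consisting of those $z$ of the appropriate Sobolev regularity ($H^1$ on the wave edges, $H^2$ on the heat edges) that satisfy the boundary and Kirchhoff conditions stated above. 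First I would verify by means of the Lumer--Phillips theorem that $A$ generates a contraction $C_0$-semigroup $(T(t))_{t\ge0}$: integrating by parts and using the Kirchhoff conditions to cancel the node contributions one gets the dissipation identity
\[
\R\iprod{Az}{z}=-\sum_{k=1}^3\gb_k\int_0^1\abs{(w^k)'}^2\,\dd x\le0,
\]
so that $E$ is non-increasing and all the dissipation resides in the heat part, while the range condition follows by solving the explicitly integrable resolvent equation on each edge and matching at the vertices; the same analysis shows that $A$ has compact resolvent, and since $\gb_1,\gb_2,\gb_3>0$ one finds $\Ker A=\{0\}$, hence $0\in\rho(A)$. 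Mild solutions then satisfy $E(t)=\tfrac12\norm{T(t)z_0}^2$, and classical solutions correspond to $z_0\in D(A)$.

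For the convergence $E(t)\to0$ I would show that $i\RR\subseteq\rho(A)$, which, the resolvent being compact, reduces to excluding eigenvalues on the imaginary axis. If $Az=\mathrm{i}sz$ with $s\in\RR$ and $z\neq0$, the dissipation identity forces $(w^k)'\equiv0$, so each $w^k$ is constant, and then $\gb_k(w^k)''=\mathrm{i}sw^k$ gives $w^k\equiv0$ when $s\neq0$; the Kirchhoff conditions now collapse to an overdetermined homogeneous boundary value problem in which both the velocity $q^k$ and the strain $p^k$ vanish at a node end of each wave edge, so by uniqueness for the resulting first-order ODE systems $z=0$. The case $s=0$ is handled directly from $Az=0$ using $q^1(0)=0$ and the Kirchhoff conditions, with $\gb_1,\gb_3>0$ entering crucially. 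Thus $\sigma(A)\cap i\RR=\emptyset$, and since $(T(t))_{t\ge0}$ is bounded with compact resolvent, the Arendt--Batty--Lyubich--V\~{u} theorem gives $T(t)\to0$ strongly, whence $E(t)\to0$ for every mild solution.

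For the rate I would establish the resolvent estimate $\norm{R(\mathrm{i}s,A)}=O(\abs{s}^{1/2})$ as $\abs{s}\to\infty$. Granting this, the Borichev--Tomilov theorem yields $\norm{T(t)A\inv}=O(t^{-2})$, which a standard refinement, valid since $i\RR\subseteq\rho(A)$, improves to $\norm{T(t)A\inv}=o(t^{-2})$; for a classical solution we then get $\norm{z(t)}=\norm{T(t)A\inv Az_0}\le\norm{T(t)A\inv}\,\norm{Az_0}=o(t^{-2})$, and therefore $E(t)=\tfrac12\norm{z(t)}^2=o(t^{-4})$. To prove the resolvent estimate I would follow the route announced in the abstract: decompose $\mathcal H$ into the wave part and the heat part, analyse the two subnetworks separately, and recombine them using recent abstract results on the polynomial stability of coupled systems. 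The heat subnetwork---three edges forming a triangle through the vertices---generates an analytic, exponentially stable semigroup, so its resolvent along $i\RR$ is uniformly bounded and is $O(\abs{s}\inv)$ at high frequency, with a resolvent kernel decaying like $e^{-c\sqrt{\abs{s}}}$ in the spatial variable; in particular, at high frequency the two wave edges are effectively insulated from each other. The wave subnetwork is conservative, and solving the wave equations explicitly on the two edges gives a controlled polynomial bound for its resolvent along $i\RR$. The abstract coupling result then combines the two, its essential hypothesis being a quantitative observation of the energy of the wave part through the traces $y^1_t(1),y^2_t(0),y^2_t(1)$ that feed the heat edges.

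The principal obstacle is precisely this last ingredient: showing, uniformly in $s$ and independently of the network topology and of the constants $\gb_1,\gb_2,\gb_3$, that no high-frequency mode of the wave subnetwork is invisible to the dissipative heat part. It is the quantitative form of this observability, together with the exponential smallness of the heat transmission at high frequency, that fixes the exponent at $\tfrac12$. The decisive component is the wave edge $k=1$, which is coupled to the heat part at a single vertex only---its other end carrying the energy-conserving condition $y^1_t(0,t)=0$---so that it behaves like the classical one-dimensional wave--heat system, for which the rate $t^{-4}$ is known to be sharp; the second wave edge, coupled at both ends, and the heat cycle decay at least as fast and do not degrade the overall rate. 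A secondary, more technical point is to equip the decoupled wave and heat subnetworks with boundary conditions for which the abstract coupling theorem applies with an admissible coupling operator.
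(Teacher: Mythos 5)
Your overall architecture coincides with the paper's: formulate the system in the strain--velocity variables on $L^2(0,1)^7$, generate a contraction semigroup via Lumer--Phillips, exclude spectrum on $i\RR$ (compact resolvent plus injectivity at $0$), prove the resolvent estimate $\|(is-A)\inv\|=O(|s|^{1/2})$, and conclude $o(t^{-2})$ decay of classical orbits from Borichev--Tomilov, hence $E(t)=o(t^{-4})$. The generation, dissipation identity, eigenvalue exclusion and the final deduction are all correct as you state them.

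The genuine gap is the resolvent estimate itself, which is the entire content of the theorem and which your proposal announces rather than proves, and moreover attributes to the wrong mechanism. In the framework the paper uses (the abstract coupling theorem of Nicaise--Paunonen--Seifert), the bound $\|(is-A)\inv\|\lesssim\mu(s)/\eta(s)$ requires two concrete verifications that your plan does not carry out. First, the wave subnetwork with the coupling traces replaced by the matched dissipative boundary conditions $G_1x+K_1x=0$ (i.e.\ $u_1(1)+v_1(1)=0$, $u_2(0)-v_2(0)=0$, $u_2(1)+v_2(1)=0$) must have $i\RR$ in its resolvent set with a \emph{uniformly bounded} resolvent; this is the precise quantitative form of your ``no high-frequency mode is invisible to the heat part'', and it is proved by showing this damped wave network is exponentially (in fact nilpotently) stable. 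Second, and decisively, one must compute the $3\times3$ transfer function $P_2(\lambda)$ of the triangular heat network --- its Dirichlet-to-Neumann matrix at the three vertices, with entries built from $\sqrt{\lambda\beta_k}/\tanh\sqrt{\lambda/\beta_k}$ and $-\sqrt{\lambda\beta_k}/\sinh\sqrt{\lambda/\beta_k}$ --- and establish both the upper bound $\|P_2(1+is)\|\lesssim 1+|s|^{1/2}$ and the coercivity $\R P_2(is)\ge c_\ep(1+|s|^{1/2})I$ for $|s|\ge\ep$, the latter via strict diagonal dominance of $\R P_2(is)$ using the exponential decay of the off-diagonal entries. The exponent $1/2$ in the final estimate is then the ratio $\|P_2(1+is)\|^2/\eta(s)\sim|s|/|s|^{1/2}$; it comes from the $\sqrt{s}$ scaling of the heat Dirichlet-to-Neumann map, not, as you suggest, from ``the exponential smallness of the heat transmission at high frequency'' combined with observability of the wave part (the exponential smallness only enters in securing the diagonal dominance). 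The uniform boundedness of $(is-A_2)\inv$ that you invoke is needed as a hypothesis but is not the quantity that fixes the rate. Your heuristic that the first wave edge reduces matters to the classical one-dimensional wave--heat system is plausible motivation but cannot substitute for these estimates, since the sharpness and even the validity of the rate for the coupled network depend on the coercivity of the full $3\times3$ matrix $\R P_2(is)$, which mixes all three heat edges.
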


Networks of PDEs have been studied extensively in the literature. Existence of solutions has been established  for instance in~\cite{JacKai19,Klo12,KraMug21,MatSik07}, while polynomial stability of networks of  wave and heat equations has been studied in~\cite{LiWan24, Ng20b} for certain particular network configurations and  in~\cite{HanZua16} for general star-shaped networks. 
Stability of various other types of PDEs on networks
has been considered in~\cite{Amm07,AmmJel04,AmmShe18,DagZua06book,EggKug18,MatSik07,ValZua09}, to give just a small selection. This work also relates to the study of fluid-structure interactions and wave-heat systems on spatial domains of one- and higher-dimensional spatial domains; see for instance~\cite{AvaLas16, BatPau16, BatPau19, DelPau23,Duy07, ZhaZua04,ZhaZua07}.

In the present paper, we establish well-posedness and stability of our wave-heat system by following the approach taken in~\cite{NiPaSe24} for composite systems subject  to abstract boundary couplings. 
The corresponding results were already used in~\cite[Sec.~4]{NiPaSe24} to investigate stability of certain types of networks of wave and heat equations. 
In this paper we investigate the structure depicted in Figure~\ref{fig:WHnetworkMultiHeat}, which is more complicated than the PDE networks considered in~\cite[Sec.~4]{NiPaSe24} on account of the fact that there are now multiple vertices at which wave equations are coupled with heat equations.  We demonstrate 
how the abstract results in~\cite{NiPaSe24} can be applied, in a slightly more intricate way than was necessary in~\cite[Sec.~4]{NiPaSe24}, to analyse this more complicated model.

The method presented in~\cite{NiPaSe24} is based on a decomposition of the wave-heat system shown in Figure~\ref{fig:WHnetworkMultiHeat} into two smaller networks; in our situation, 
 one  ``wave network'' and one ``heat network''. The decomposition of the full network model and the interaction between the two subnetworks is illustrated in Figure~\ref{fig:WHnetworkDecomposition}. We analyse these two simpler networks separately in Section~\ref{sec:WHNetwork} in order to prove Theorem~\ref{thm:WaveMultiHeat} in Section~\ref{sec:energy}. First, in Section~\ref{sec:Prelim}, we reproduce the theoretical results from~\cite{NiPaSe24} which will form the basis for our analysis.

\begin{figure}[h!]
\begin{center}
\includegraphics[width=0.8\linewidth]{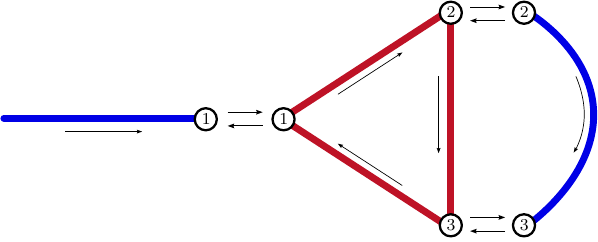}
\end{center}
\caption{Decomposition of the network into a wave network (the two blue edges) and a heat network (the three red edges) which interact via coupling at the three vertices.
}
\label{fig:WHnetworkDecomposition}
\end{figure}

 We use standard notation and terminology throughout. All normed spaces will be considered over the complex field. Given normed spaces $X$ and $Y$, we write $\B(X,Y)$ for the space of bounded linear operators from $X$ to $Y$, and we write $\B(X)$ for $\B(X,X)$. We denote the domain of a linear operator $L$  by $D(L)$, and we endow this space with the graph norm. We denote the  kernel of a linear operator $L$ by  $\Ker L$, and we write  $\rho(L)$ for the resolvent set of $L$. Furthermore, given a bounded linear operator $T\in\B(X)$ on a Hilbert space $X$, we denote by $\R T$ the self-adjoint operator $\frac12(T+T^*)$, and given two self-adjoint bounded linear operators $S,T\in\B(X)$ on a Hilbert space $X$ we write $T\ge S$ to mean that $\langle Tx-Sx,x\rangle\ge0$ for all $x\in X$. If $p$ and $q$ are two real-valued quantities we write $p\lesssim q$ to express that $p\le Cq$ for some constant $C>0$ which is independent of all parameters that are free to vary in a given situation. We shall also make use of standard `big-O' and `little-o' notation.

\section{Boundary nodes and an abstract stability result}
\label{sec:Prelim}

In this section we present the abstract setting we shall use to analyse our wave-heat network, and we recall the main result of~\cite{NiPaSe24}. Let  $X$ and $U$ be Hilbert spaces, and let $L\colon D(L)\subseteq X\to X$, $G, K\colon D(L)\subseteq X\to U$ be linear operators. The triple $(G,L,K)$
is said to be a \emph{boundary node} on  $(U,X,U)$ if 
\begin{itemize}
\item[\textup{(i)}]  $G,K\in\B(D(L),U)$;
\item[\textup{(ii)}] the restriction $A $ of $L$ to $\Ker G$ generates a $C_0$-semigroup on $X$;
\item[\textup{(iii)}]  the operator $G$ has a bounded right-inverse, i.e.\ there exists $G^r\in\B(U,D(L))$ such that $GG^r=I$.
\end{itemize}
The boundary node $(G,L,K)$ is said to be \emph{impedance passive} if
\begin{equation}\label{eq:pass}
\R\<Lx,x\>_X\le\R\<Gx,Kx\>_U,\qquad x\in D(L),
\end{equation}
and in this case  the $C_0$-semigroup generated by $A$ is necessarily contractive, by the Lumer--Phillips theorem \cite[Thm.~II.3.15]{EngNag00book}.

If $(G,L,K)$ is a boundary node on $(U,X,U)$ and  $A$ denotes the restriction of $L$ to $\Ker G$, we define the \emph{transfer function}  $P\colon\rho(A)\to\B(U)$  by $P(\lambda)u=Kx$ for all $\lambda\in\rho(A)$ and $u\in U$, where $x\in D(L)$ is the unique solution of the abstract elliptic problem
$$(\lambda-L)x=0,\qquad Gx=u.$$
Existence of $P$ follows from~\cite[Rem.~10.1.5 \& Prop.~10.1.2]{TucWei09book}. Recall also from~\cite[Prop.~2.4(b)]{NiPaSe24} that if the boundary node $(G,L,K)$  is impedance passive then $\R P(\gl)\ge0$ for all $\gl\in\rho(A)\cap\overline{\C_+}$.
As in~\cite{NiPaSe24}, we do not distinguish notationally between  $P$ and its analytic extensions to larger domains. For more information on boundary nodes, see  \cite[Ch.~11]{JacZwa12book}, \cite{MalSta07}.

Let $(G_1, L_1, K_1)$ and $( G_2, L_2,  K_2)$ be two impedance passive boundary nodes on $(U,X_1,U)$ and $(U,X_2,U)$, respectively, where $X_1,$ $X_2$ and $U$ are Hilbert spaces, and consider the coupled system
\begin{equation}\label{eq:coupled_sys}
\left\{\begin{aligned}
	  \dot{z}_1(t) &= L_1 z_1(t),  \qquad&t\ge0,\\
	  	  \dot{z}_2(t) &= L_2 z_2(t),  \qquad&t\ge0,\\
	  G_1 z_1(t) &=K_2z_2(t), &t\ge0,\\
	  G_2z_2(t) &= -K_1 z_1(t), &t\ge0,\\
	  z_1(0)&\in X_1,\ z_2(0)\in X_2.
	\end{aligned}\right.
\end{equation}
We may reformulate this coupled system as an abstract Cauchy problem
\begin{equation*}
\left\{\begin{aligned}
	  \dot{z}(t) &= A z(t),  \qquad t\ge0,\\
	  	  z(0) &=z_0  
	\end{aligned}\right.
\end{equation*}
for $z(\cdot)=(z_1(\cdot),z_2(\cdot))^\top$ on $X=X_1\times X_2$, where $z_0=(z_1(0),z_2(0))^\top\in X$  and $A=\diag(L_1,L_2)\colon D(A)\subseteq X\to X$ with 
$$D(A)=\big\{(x_1,x_2)^\top\in D(L_1)\times D(L_2): G_1x_1=K_2x_2,\ G_2x_2=-K_1x_1\big\}.$$
For $j=1,2$ we denote by $A_j$ the restriction of $L_j$ to $\Ker G_j$ and we denote the transfer function of the boundary node $(G_j,L_j,K_j)$  by $P_j$.

The following is a special case of~\cite[Thm.~3.1]{NiPaSe24}. The result makes it possible to deduce a growth bound for the resolvent of $A$ from information about the boundary nodes $(G_1+K_1,L_1, K_1)$ and $(G_2, L_2, K_2)$. Here we may think of $(G_1+K_1,L_1, K_1)$ as a boundary-damped version of the boundary node $(G_1,L_1, K_1)$. 
For instance, in our wave-heat network 
 $(G_1+K_1,L_1, K_1)$ describes a wave network with dissipative boundary conditions.
Note that in the coupled system this boundary-damping is replaced by the interconnection with the boundary node $(G_2,L_2,  K_2)$.

\begin{thm}\label{thm:res}
Let $(G_1, L_1, K_1)$ and $(G_2,L_2,  K_2)$ be two impedance passive boundary nodes on $(U,X_1,U)$ and $(U,X_2,U)$, respectively, and let $A_0$ denote the restriction of $L_1$ to $\Ker (G_1+K_1)$. Suppose there exists a non-empty set $E\subseteq\{s\in\RR:is\in \rho(A_0)\cap\rho(A_2)\}$  and that
\begin{equation}\label{eq:res_bd}
\sup_{s\in E}\|(is-A_j)\inv\|<\infty
\end{equation}
for  $j=0,2$. Suppose furthermore that there exists a function $\eta\colon E\to(0,\infty)$ such that 
$\R P_2(is)\ge\eta(s)I$ for all $s\in E.$
Then $A$ generates a contraction semigroup on $X$, $iE\subseteq\rho(A)$ and
\begin{equation}\label{eq:res_est}
\|(is-A)\inv\|\lesssim  \frac{\mu(s)}{\eta(s)},\qquad s\in E,
\end{equation}
where $\mu(s)=1+\|P_2(1+is)\|^2$ for  $s\in \RR$.
\end{thm}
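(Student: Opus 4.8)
The most direct plan is to deduce the statement from the general resolvent theorem \cite[Thm.~3.1]{NiPaSe24}, of which it is the special case in which the resolvents of $A_0$ and $A_2$ are \emph{uniformly} bounded along $iE$. In the general version of that result the bound on $(is-A)\inv$ carries, besides the factor $\mu(s)/\eta(s)$, two further factors essentially of the form $1+\|(is-A_0)\inv\|$ and $1+\|(is-A_2)\inv\|$; hypothesis~\eqref{eq:res_bd} says precisely that these are $O(1)$ for $s\in E$, so they may be absorbed into the implicit constant in~$\lesssim$, leaving~\eqref{eq:res_est}. Generation of a contraction semigroup by $A$ and the inclusion $iE\subseteq\rho(A)$ are likewise part of the conclusion of \cite[Thm.~3.1]{NiPaSe24}. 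So the only genuine task is to match up the two sets of hypotheses and read off the specialised conclusion; the remaining paragraphs merely indicate how \cite[Thm.~3.1]{NiPaSe24} itself is proved.

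\emph{Generation of a contraction semigroup.} One checks that the coupled system~\eqref{eq:coupled_sys} is again impedance passive: for $z=(z_1,z_2)^\top\in D(A)$, applying~\eqref{eq:pass} to each node and then inserting the coupling relations $G_1z_1=K_2z_2$ and $G_2z_2=-K_1z_1$ gives
\[
\R\iprod{Az}{z}_X\le\R\iprod{G_1z_1}{K_1z_1}_U+\R\iprod{G_2z_2}{K_2z_2}_U=\R\iprod{K_2z_2}{K_1z_1}_U-\R\iprod{K_1z_1}{K_2z_2}_U=0,
\]
the last equality because $\iprod{K_1z_1}{K_2z_2}_U=\overline{\iprod{K_2z_2}{K_1z_1}_U}$. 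Hence $A$ is dissipative, and together with a range condition (routine for large $\gl>0$) and the Lumer--Phillips theorem this gives that $A$ generates a contraction semigroup on $X$.

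\emph{The resolvent estimate.} Fix $s\in E$ and $z_0=(f_1,f_2)^\top\in X$ and look for $z=(z_1,z_2)^\top$ with $(is-A)z=z_0$. Since $is\in\rho(A_0)$, one parametrises the solutions of $(is-L_1)z_1=f_1$ through the boundary-damped node $(G_1+K_1,L_1,K_1)$ — which is itself impedance passive — by writing $z_1=(is-A_0)\inv f_1+\xi_1$ with $(is-L_1)\xi_1=0$; then, with $g:=(G_1+K_1)z_1\in U$, one has $K_1\xi_1=P_0(is)g$, where $P_0$ denotes the transfer function of that node, and impedance passivity of $(G_1,L_1,K_1)$ yields the refined positivity $\R P_0(is)\ge P_0(is)^*P_0(is)$. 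Similarly, since $is\in\rho(A_2)$, one writes $z_2=(is-A_2)\inv f_2+\xi_2$ with $(is-L_2)\xi_2=0$ and, with $h:=G_2z_2\in U$, obtains $K_2\xi_2=P_2(is)h$. Feeding these into the two coupling relations collapses the problem to the single equation
\[
\bigl(I-P_0(is)+P_2(is)P_0(is)\bigr)g=(I-P_2(is))K_1(is-A_0)\inv f_1+K_2(is-A_2)\inv f_2
\]
on $U$. A short computation from $\R P_0(is)\ge P_0(is)^*P_0(is)$ and $\R P_2(is)\ge\eta(s)I$ then shows that the coefficient operator is invertible — which also gives $is\in\rho(A)$ — with inverse of norm $\lesssim(1+\|P_2(is)\|)/\eta(s)$. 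Solving for $g$ and $h$, reconstructing $\xi_1,\xi_2$ via the elliptic solution operators of the two nodes, and estimating $\|z\|_X$, one is left with the norms of the maps $K_j(is-A_j)\inv$ and of those solution operators; these are controlled by $\|(is-A_j)\inv\|$ and by the size of $P_2$, and $\|P_2(is)\|$ is in turn bounded by a Phragm\'en--Lindel\"of argument comparing $P_2$ on $i\RR$ with its values on the line $1+i\RR$ — which is exactly where $\mu(s)=1+\|P_2(1+is)\|^2$ enters.

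The hard part is this last step: identifying the correct equation on $U$, proving its coefficient operator invertible with the stated bound via the interplay of the two positivity conditions, and controlling the unbounded boundary maps through the transfer functions so that the final estimate depends on $s$ only through $\mu(s)/\eta(s)$. Since all of this is done in \cite[Thm.~3.1]{NiPaSe24}, in practice I would simply verify there that its hypotheses hold in the present situation — immediate from~\eqref{eq:res_bd} and $\R P_2(is)\ge\eta(s)I$ — and quote the conclusion.
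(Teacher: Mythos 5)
Your proposal takes essentially the same route as the paper: the paper gives no proof of this theorem but simply states that it is a special case of \cite[Thm.~3.1]{NiPaSe24}, exactly as you conclude, with the uniform resolvent bounds in~\eqref{eq:res_bd} absorbing the extra factors present in the general version (cf.\ the remark following the theorem). Your additional sketch of how the general theorem is established is supplementary and cannot be checked against the paper, but the operative step --- matching hypotheses and quoting the conclusion of the cited result --- is precisely what the paper does.
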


\begin{rem}
Note that~\cite[Thm.~3.1]{NiPaSe24} is in fact more general than this. In particular, in~\cite[Thm.~3.1]{NiPaSe24} the space $U$ is allowed to be different for each of the two boundary nodes, and in addition we may replace~\eqref{eq:res_bd} by the weaker condition that the norms of the two resolvents can be estimated from above along the imaginary axis by functions which need not be bounded. 
\end{rem}

\section{Analysis of the wave-heat network}
\label{sec:WHNetwork}

We begin by expressing the dynamics of the wave-heat network as an abstract Cauchy problem on a Hilbert space $X = L^2(0,1)^7$.
To this end, we define
$$
z(t)= (y_x^1(\cdot,t),y_t^1(\cdot,t),
y_x^2(\cdot,t),y_t^2(\cdot,t),
w^1(\cdot,t),w^2(\cdot,t),
w^3(\cdot,t)
)^\top
$$
for $t\geq 0$.
We note that the total energy of the solutions of the wave-heat network satisfies
$$
E(t) = \frac{1}{2} \norm{z(t)}^2, \qquad t\geq 0.
$$
The evolution of the state $z(\cdot)$  is described by the abstract Cauchy problem
\begin{equation}
\label{eq:ACP}
\begin{cases}
\dot z(t) = Az(t), \qquad t\geq0,\\
z(0) = z_0\in X,
\end{cases}
\end{equation}
where the generator $A\colon D(A)\subseteq X\to X$ is defined by
$$
Ax = (v_1',u_1',v_2',u_2',\gb_1 w_1'',\gb_2 w_2'',\gb_3 w_3'')^\top
$$
for $x=(u_1,v_1,u_2,v_2,w_1,w_2,w_3)^\top$
in the domain
$$
\begin{aligned}
D(A)
&=\bigl\{(u_1,v_1,u_2,v_2,w_1,w_2,w_3)^\top\in H^1(0,1)^4\times H^2(0,1)^3:
v_1(0)=0, \\
 &\hspace{1cm} v_1(1)=w_1(0) = w_3(1) , \quad
 u_1(1)-\beta_1 w_1'(0) + \beta_3 w_3'(1) =0,\\
 &\hspace{1cm} w_1(1) = w_2(0) =  v_2(0) , \quad 
\beta_1 w_1'(1) - \beta_2 w_2'(0) - u_2(0) =0,\\
 &\hspace{1cm} w_2(1) = w_3(0) =  v_2(1)  , \quad
\beta_2 w_2'(1) - \beta_3 w_3'(0) + u_2(1)=0 
~\bigl\}.
\end{aligned}$$
As will become clear in Section~\ref{sec:energy} the operator $A$ generates a $C_0$-semigroup $\T$ of contractions on $X$. We recall that the \emph{mild solution} of the abstract Cauchy problem~\eqref{eq:ACP} is given by $z(t)=T(t)z_0$ for $t\ge0$, and that $z(\cdot)$ is a \emph{classical solution} if and only if $z_0\in D(A)$; see~\cite[Sec.~II.6]{EngNag00book}. 

In order to apply the results in Section~\ref{sec:Prelim}, we  express $A$ and $D(A)$ in terms of two boundary nodes $(G_1,L_1,K_1)$ and $(G_2,L_2,K_2)$ on $(\C^3,L^2(0,1)^4,\C^3)$ and $(\C^3,L^2(0,1)^3,\C^3)$, respectively. 
We achieve this by decomposing the state $z(\cdot)$ into two parts, one related to the two wave equations, the second related to the three heat equations. More specifically, for $t\ge0$ we write
$$
z(t) = \begin{pmatrix}
z_1(t)\\ z_2(t)
\end{pmatrix}, \qquad
z_1(t)= \begin{pmatrix}u_1(t)\\v_1(t)\\
u_2(t)\\v_2(t)\end{pmatrix},
\qquad 
z_2(t)=\begin{pmatrix}
w_1(t)\\ w_2(t)\\ w_3(t) \end{pmatrix}.
$$
With respect to this decomposition the operator $A$ can be  expressed in the form
$A=\diag(L_1,L_2)$, where  
$$
L_1 \begin{pmatrix}u_1\\v_1\\u_2\\v_2\end{pmatrix} = \begin{pmatrix}v_1'\\u_1'\\v_2'\\u_2'\end{pmatrix}$$
with domain
$D(L_1) = \{ (u_1,v_1,u_2,v_2)^\top\in H^1(0,1)^4 : v_1(0)=0 \}$
and
$$L_2 \begin{pmatrix}w_1\\w_2\\w_3\end{pmatrix} = \begin{pmatrix}\gb_1 w_1''\\\gb_2 w_2''\\ \gb_3 w_3''\end{pmatrix}$$
with domain $D(L_2) =\{ (w_1,w_2,w_3)^\top\in H^2(0,1)^3 : w_3(1)=w_1(0), ~ w_1(1)=w_2(0), w_2(1)=w_3(0)\}$.
In addition, we define $G_1,K_1\colon D(L_1)\to\C^3$ by 
$$G_1\begin{pmatrix}u_1\\v_1\\u_2\\v_2\end{pmatrix} = \begin{pmatrix}-u_1(1)\\u_2(0)\\-u_2(1)\end{pmatrix}
, \qquad K_1 \begin{pmatrix}u_1\\v_1\\u_2\\v_2\end{pmatrix} = \begin{pmatrix}-v_1(1)\\-v_2(0)\\-v_2(1)\end{pmatrix}$$
and  $G_2,K_2\colon D(L_2)\to\C^3$ by
$$G_2\begin{pmatrix}w_1\\w_2\\w_3\end{pmatrix} = \begin{pmatrix}w_1(0)\\w_2(0)\\w_3(0)\end{pmatrix}
, \qquad K_2\begin{pmatrix}w_1\\w_2\\w_3\end{pmatrix} = \begin{pmatrix}\gb_3 w_3'(1)-\gb_1 w_1'(0)\\\gb_1 w_1'(1)-\gb_2 w_2'(0)\\\gb_2 w_2'(1)-\gb_3 w_3'(0)\end{pmatrix},$$
noting that these definitions ensure that $D(A)=\{(x_1,x_2)^\top\in D(L_1)\times D(L_2): G_1x_1=K_2x_2,\ G_2x_2=-K_1x_1\},$ as required in Section~\ref{sec:Prelim}.

We now show that $(G_1,L_1,K_1)$ and $(G_2,L_2,K_2)$ are impedance passive boundary nodes, and that the restrictions $A_0$ and $A_2$ of $L_1$ to $\Ker(G_1+K_1)$ and of $L_2$ to $\Ker G_2$, respectively, have uniformly bounded resolvents along the imaginary axis. Furthermore, we derive bounds for the transfer function $P_2$ of the second node. Once these objectives have been met, Theorem~\ref{thm:WaveMultiHeat} will be a straightforward consequence of Theorem~\ref{thm:res} and the stability theory of strongly continuous semigroups.

\subsection{Analysis of the wave network}
\label{sec:WaveNetwork}

In this section we analyse the wave network and show that $(G_1,L_1,K_1)$ defines an impedance passive boundary node on $(\C^3,X_1,\C^3)$, where $X_1=L^2(0,1)^4$. From the above definitions it is clear that $G_1,K_1\in \B(D(L_1),\C^3)$, and since $G_1$ is surjective and $\C^3$ is finite-dimensional we know that $G_1$ has a bounded right-inverse.
We note that the restriction $A_1$ of $L_1$ to $\Ker G_1$ has the structure $A_1 = \diag(A_{1,1},A_{1,2})$, where $A_{1,1},A_{1,2}\colon(u,v)^\top\mapsto(v',u')^\top$ with respective domains $D(A_{1,1}) = \{(u,v)^\top\in H^1(0,1)^2:u(1)=0, v(0)=0\}$ and
$D(A_{1,2}) = H_0^1(0,1)\times H^1(0,1)$.
Since both $A_{1,1}$ and $A_{1,2}$ generate strongly continuous contraction semigroups on $L^2(0,1)^2$, $A_1$ generates a contraction semigroup on $X_1$.
Thus $(G_1,L_1,K_1)$ is a boundary node.  A straightforward computation using integration by parts shows that $\R\langle L_1 x,x\rangle=\R\langle G_1x,K_1x\rangle$ for all $x\in D(L_1)$, so the boundary node $(G_1,L_1,K_1)$ is impedance passive. Arguing as for instance in~\cite[Ex.~9.2.1]{JacZwa12book} for the wave equations along each of the blue edges in Figure~\ref{fig:WHnetworkDecomposition} it is possible to show that the semigroup generated by the restriction $A_0$ of $L_1$ to $\Ker(G_1+K_1)$ is uniformly exponentially stable. In fact, by adapting the argument given in~\cite[Sec.~1]{CoxZua95} we see that this semigroup is even nilpotent in our case. Either way, it follows that  $i\RR\subseteq\rho(A_0)$ and $\sup_{s\in\RR}\|(is-A_0)\inv\|<\infty$.

\subsection{Analysis of the heat network}
\label{sec:HeatNetwork}

It remains to analyse the heat network. In this section we show that $(G_2,L_2,K_2)$ defines an impedance passive boundary node on $(\C^3,X_2,\C^3)$, where $X_2=L^2(0,1)^3$, and we analyse the transfer function $P_2$ of this boundary node.
It would certainly be possible to establish the boundary node property and stability of $(G_2,L_2,K_2)$ directly from the definitions of $L_2$, $G_2$, and $K_2$. Instead, however, we demonstrate how the analysis can be simplified by viewing the boundary node as an \emph{interconnection} of three individual heat equations. This approach allows us to use the interconnection theory developed in~\cite{AalMal13} to show the boundary node property and impedance passivity of $(G_2,L_2,K_2)$, and also provides an efficient way of deriving a formula for the transfer function $P_2$.
We begin, therefore, by analysing a single heat equation with two inputs and two outputs. In what follows we use the principal branch of the logarithm to define the complex square root. 

\begin{lem}\label{lem:SingleHeatProps}
Let $X=L^2(0,1)$ and $\gb>0$, and define $L\colon D(L)\subseteq X\to X$ and $G,K\colon D(L)\to \C^2$ by $D(L)=H^2(0,1)$ and 
$$
L w = \gb w'', \qquad G w = \begin{pmatrix}w(0)\\w(1)\end{pmatrix}, \qquad  Kw=\begin{pmatrix}-\gb w'(0)\\\gb w'(1)\end{pmatrix}
$$
for $w\in D(L)$.
Then $(G,L,K)$ is an impedance passive boundary node on $(\C^2,X,\C^2)$,  the semigroup generated by the restriction $A$ of $L$ to $\Ker G$ is uniformly exponentially stable, and the map $w\mapsto (Gw,Kw)^\top$ is a surjection from $D(L)$ onto $\C^4$. Furthermore, the  transfer function $P$ of the boundary node $(G,L,K)$ satisfies
$$
P(\gl) = \begin{pmatrix}p_1(\gl)&p_2(\gl)\\ p_2(\gl)& p_1(\gl)\end{pmatrix},\qquad \gl\in\overline{\C_+},
$$
where
$$
p_1(\gl) 
 = \frac{\sqrt{\gl \gb}}{\tanh\sqrt{\gl/\gb}}, 
\qquad p_2(\gl)
 = -\frac{\sqrt{\gl \gb}}{\sinh\sqrt{\gl/\gb}}
$$
for $\gl\in \overline{\C_+}\setminus \{ 0\}$ and $p_1(0)=-p_2(0)=\gb$.
\end{lem}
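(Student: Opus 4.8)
The plan is to verify the three claims in Lemma~\ref{lem:SingleHeatProps} in sequence: (i) the boundary-node and impedance-passivity properties, (ii) exponential stability of $A$, and (iii) the explicit formula for $P$, from which surjectivity of $w\mapsto(Gw,Kw)^\top$ onto $\C^4$ will also drop out. The operator $L=\gb\,\dd^2/\dd x^2$ on $D(L)=H^2(0,1)$ is the classical one-dimensional Laplacian with two boundary inputs and two collocated outputs, so all of this is essentially standard; the task is to assemble it cleanly.

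First I would check the boundary-node axioms. Clearly $G,K\in\B(D(L),\C^2)$ since the trace and normal-trace maps are bounded on $H^2(0,1)$ with the graph norm. The restriction $A$ of $L$ to $\Ker G=H^2(0,1)\cap H^1_0(0,1)$ is $\gb$ times the Dirichlet Laplacian, which is self-adjoint, negative definite, and hence generates a (strongly continuous, analytic) contraction semigroup; this also gives $i\RR\subseteq\rho(A)$ immediately. For the right-inverse of $G$, one simply takes $G^r\colon(a,b)^\top\mapsto$ the affine function $x\mapsto a+(b-a)x$, which lies in $D(L)=H^2(0,1)$ and satisfies $GG^r=I$; boundedness is clear. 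Impedance passivity is the integration-by-parts identity
\begin{equation*}
\R\iprod{Lw}{w}_X=\R\int_0^1\gb w''\bar w\,\dd x
=\R\Bigl(\gb w'(1)\overline{w(1)}-\gb w'(0)\overline{w(0)}-\gb\int_0^1|w'|^2\,\dd x\Bigr)
\le\R\iprod{Gw}{Kw}_{\C^2},
\end{equation*}
using $\iprod{Gw}{Kw}_{\C^2}=-\gb\overline{w'(0)}w(0)+\gb\overline{w'(1)}w(1)$ (note the complex conjugation convention; one takes $\R$ so the ordering of the factors is immaterial). Exponential stability of $A$ is just the spectral-gap statement for the Dirichlet Laplacian: $\sigma(A)=\{-\gb n^2\pi^2:n\ge1\}$, so the spectral bound is $-\gb\pi^2<0$, and since the semigroup is analytic (or since $A$ is normal) the growth bound equals the spectral bound.

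For the transfer function I would solve the elliptic boundary value problem directly. Fix $\gl\in\overline{\C_+}\setminus\{0\}$ and write $\omega=\sqrt{\gl/\gb}$ using the principal branch, so $\R\omega>0$. The general solution of $\gl w-\gb w''=0$ is $w(x)=c_1\cosh(\omega x)+c_2\sinh(\omega x)$; imposing $Gw=(w(0),w(1))^\top=(a,b)^\top$ gives $c_1=a$ and $c_1\cosh\omega+c_2\sinh\omega=b$, hence $c_2=(b-a\cosh\omega)/\sinh\omega$ (and $\sinh\omega\neq0$ precisely because $\gl\notin\{-\gb n^2\pi^2\}$, which holds on $\overline{\C_+}\setminus\{0\}$). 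Then $P(\gl)(a,b)^\top=Kw=(-\gb w'(0),\gb w'(1))^\top$, and a short computation with $w'(0)=\omega c_2$ and $w'(1)=\omega(c_1\sinh\omega+c_2\cosh\omega)$ yields, after simplification using $\gb\omega=\sqrt{\gl\gb}$ and $\cosh^2\omega-1=\sinh^2\omega$, exactly the claimed matrix with $p_1(\gl)=\sqrt{\gl\gb}\coth\omega$ and $p_2(\gl)=-\sqrt{\gl\gb}/\sinh\omega$. The value at $\gl=0$ is obtained either by solving $w''=0$ directly (the affine solution $w(x)=a+(b-a)x$ gives $-\gb w'(0)=-\gb(b-a)$ and $\gb w'(1)=\gb(b-a)$, i.e.\ $p_1(0)=\gb$, $p_2(0)=-\gb$) or by taking the limit $\gl\to0$, using $\coth\omega=\omega\inv+O(\omega)$ and $\sinh\omega=\omega+O(\omega^3)$. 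Finally, surjectivity of $w\mapsto(Gw,Kw)^\top$ onto $\C^4$: given any $(a,b,c,d)^\top\in\C^4$, choose the cubic polynomial $w(x)=a+\alpha x+\beta x^2+\gamma x^3\in H^2(0,1)$ and solve the resulting linear system ($w(0)=a$, $-\gb w'(0)=c$, $w(1)=b$, $\gb w'(1)=d$) for $\alpha,\beta,\gamma$; the system is triangular-solvable, so $w\mapsto(Gw,Kw)^\top$ is onto.

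The only mild subtlety — and the step most prone to sign errors — is the bookkeeping in the transfer-function computation: keeping track of the signs in the definition of $K$ (the outward-normal convention $Kw=(-\gb w'(0),\gb w'(1))^\top$), of the complex-conjugation convention in the inner product on $\C^2$, and of the branch of the square root ensuring $\R\sqrt{\gl/\gb}>0$ so that $\coth$ and $\sinh$ are analytic and nonvanishing on $\overline{\C_+}\setminus\{0\}$. None of this is deep, but it must be done carefully; everything else is a direct application of the self-adjointness of the Dirichlet Laplacian and elementary ODE theory.
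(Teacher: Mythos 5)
Your proposal is correct and follows essentially the same route as the paper: the boundary-node, passivity, stability and surjectivity claims are verified by the standard direct arguments (which the paper dismisses as straightforward), and the transfer function is obtained, exactly as in the paper, by solving the boundary-value problem $\gl w-\gb w''=0$, $w(0)=u_1$, $w(1)=u_2$ in terms of $\cosh$ and $\sinh$ of $\nu=\sqrt{\gl/\gb}$ and reading off $(-\gb w'(0),\gb w'(1))^\top$. Your additional details (the affine right-inverse, the integration-by-parts identity, the spectral-gap argument, and the cubic ansatz for surjectivity) are all accurate.
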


\begin{proof}
It is straightforward to establish that $(G,L,K)$ is an impedance passive boundary node on $(\C^2,X,\C^2)$,  that  the semigroup generated by $A$ is uniformly exponentially stable, and that the map $w\mapsto (Gw,Kw)^\top$ is a surjection from $D(L)$ onto $\C^4$. In order to determine the transfer function $P$, let $\gl\in\overline{\C_+}$ and $u=(u_1,u_2)^\top\in\C^2$. Then $P(\gl)u=(-\gb w'(0),\gb w'(1))^\top$, where $w\in H^2(0,1)$ is the unique solution of the problem $\gl w-\gb w''=0$ subject to the boundary conditions $w(0)=u_1$ and $w(1)=u_2$. Solving this boundary-value problem yields
$w(x)=u_1(1-x)+u_2x$ for $x\in(0,1)$ if $\gl=0$, and $$w(x)=u_1\cosh(\nu x)+\left(\frac{u_2}{\sinh \nu}-\frac{u_1}{\tanh \nu}\right)\sinh (\nu x),\qquad x\in(0,1),$$
if $\gl\in\overline{\C_+}\setminus\{0\}$,
where $\nu=\sqrt{\gl/\gb}$. The formula for $P(\gl)$ follows at once.
\end{proof}

The following is the key result in analysing the heat part of our system.

\begin{prp}
\label{prp:HeatNetAnalysis}
The triple $(G_2,L_2,K_2)$ is an impedance passive boundary node on $(\C^3,X_2,\C^3)$, and the semigroup generated by the restriction $A_2$ of $L_2$ to $\Ker G_2$ is uniformly exponentially stable. 
Furthermore, the transfer function $P_2$ satisfies
$$\|P_2(1+is)\|\lesssim 1+\abs{s}^{1/2}, \qquad s\in\RR,$$
and, given any  $\ep>0$, there exists a constant $c_\ep>0$ such that
\begin{equation}\label{eq:Re_lb}
\R P_2(is) \ge c_\ep(1+{\abs{s}^{1/2}})I, \qquad s\in\RR\setminus (-\ep,\ep).
\end{equation}
\end{prp}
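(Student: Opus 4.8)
The plan is to realise the heat node $(G_2,L_2,K_2)$ as an interconnection of three copies of the two-port heat system from Lemma~\ref{lem:SingleHeatProps}, and to deduce all three claimed properties from that representation. First I would set $\gb=\gb_k$ in Lemma~\ref{lem:SingleHeatProps} for $k=1,2,3$ to obtain boundary nodes $(G^{(k)},L^{(k)},K^{(k)})$ on $(\C^2,L^2(0,1),\C^2)$, each with $G^{(k)}w=(w(0),w(1))^\top$ and $K^{(k)}w=(-\gb_k w'(0),\gb_k w'(1))^\top$. Stacking these gives a boundary node on $(\C^6,X_2,\C^6)$, and the identifications $w_3(1)=w_1(0)$, $w_1(1)=w_2(0)$, $w_2(1)=w_3(0)$ together with the three Kirchhoff balance conditions defining $D(L_2)$, $G_2$, $K_2$ are exactly an impedance-passive interconnection in the sense of~\cite{AalMal13}: the three ``common vertex values'' $w_1(0),w_1(1),w_2(1)$ (equal to $w_3(1),w_2(0),w_3(0)$ respectively) become the inputs collected in $G_2$, while the Kirchhoff sums of the conormal derivatives become the outputs $K_2$. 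Invoking the interconnection theory of~\cite{AalMal13} then yields at once that $(G_2,L_2,K_2)$ is an impedance passive boundary node on $(\C^3,X_2,\C^3)$. For uniform exponential stability of $A_2$ one can either quote the corresponding statement from the interconnection framework, or argue directly: the resolvent problem $(\gl-L_2)x=0$, $G_2x=0$ forces each component $w_k$ to satisfy $\gl w_k-\gb_k w_k''=0$ with $w_k(0)=0$, and chasing the vertex identities shows all $w_k$ vanish, so $i\RR\subseteq\rho(A_2)$; a standard resolvent estimate for the heat semigroup (analyticity, or the explicit $\cosh/\sinh$ formulas) then gives $\sup_{s\in\RR}\|(is-A_2)\inv\|<\infty$, which is stronger than exponential stability is needed for but in any case implies it together with analyticity.

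The substantive part is the transfer function. Using the interconnection formula from~\cite{AalMal13}, or equivalently solving $(\gl-L_2)x=0$, $G_2x=u$ directly, $P_2(\gl)$ is obtained by assembling the three $2\times2$ blocks $P^{(k)}(\gl)=\bigl(\begin{smallmatrix}p_1^{(k)}(\gl)&p_2^{(k)}(\gl)\\ p_2^{(k)}(\gl)&p_1^{(k)}(\gl)\end{smallmatrix}\bigr)$ from Lemma~\ref{lem:SingleHeatProps} (with $p_1^{(k)},p_2^{(k)}$ the functions there for $\gb=\gb_k$) according to which edge-endpoints meet at each vertex. Concretely, writing $u=(u_1,u_2,u_3)^\top$ for the vertex values and matching endpoints, one finds that $P_2(\gl)$ is a fixed $3\times3$ matrix whose entries are $\ZZ$-linear combinations of $p_1^{(k)}(\gl)$ and $p_2^{(k)}(\gl)$ over $k=1,2,3$ — each diagonal entry is a sum of two $p_1^{(k)}$'s (the two edges incident to that vertex) and each off-diagonal entry is a single $\pm p_2^{(k)}$ (the edge joining the two vertices). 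Since $p_1^{(k)}(\gl)=\sqrt{\gl\gb_k}\coth\sqrt{\gl/\gb_k}$ and $p_2^{(k)}(\gl)=-\sqrt{\gl\gb_k}/\sinh\sqrt{\gl/\gb_k}$, on the vertical line $\gl=1+is$ we have $\sqrt{\gl}=O(|s|^{1/2})$ while $\coth\sqrt{\gl/\gb_k}\to1$ and $1/\sinh\sqrt{\gl/\gb_k}\to0$ as $|s|\to\infty$; hence every entry is $O(1+|s|^{1/2})$ and the norm bound $\|P_2(1+is)\|\lesssim 1+|s|^{1/2}$ follows.

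For the coercivity estimate~\eqref{eq:Re_lb}, recall from Section~\ref{sec:Prelim} that impedance passivity already gives $\R P_2(\gl)\ge0$ for $\gl\in\overline{\C_+}$, so only a quantitative lower bound on $\R P_2(is)$ for $|s|\ge\ep$ is needed, and by the interconnection structure $\R P_2(is)$ is the ``vertex compression'' of $\bigoplus_{k=1}^3\R P^{(k)}(is)$. I would therefore first establish the one-edge lower bound $\R P^{(k)}(is)\ge c(1+|s|^{1/2})I_2$ for $|s|\ge\ep$: the eigenvalues of the symmetric $2\times2$ block are $p_1^{(k)}(is)\pm p_2^{(k)}(is)=\sqrt{is\gb_k}\tanh(\tfrac12\sqrt{is/\gb_k})$ and $\sqrt{is\gb_k}\coth(\tfrac12\sqrt{is/\gb_k})$, and since $\sqrt{is}=\sqrt{|s|}e^{\pm i\pi/4}$ has real and imaginary parts both comparable to $|s|^{1/2}$ while $\tanh$ and $\coth$ of $\tfrac12\sqrt{is/\gb_k}$ tend to $1$, both real parts are bounded below by $c\,|s|^{1/2}$ for large $|s|$ and by a positive constant on the compact set $\ep\le|s|\le R$ (using that $is\in\rho(A_2)$ so $P_2$ is regular there and $\R P^{(k)}(is)>0$ strictly). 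This gives $\bigoplus_k\R P^{(k)}(is)\ge c_\ep(1+|s|^{1/2})I_6$, and compressing to the three-dimensional vertex space preserves the inequality, yielding~\eqref{eq:Re_lb}. The main obstacle I anticipate is purely bookkeeping: correctly reading off from Figure~\ref{fig:WHnetworkDecomposition} which endpoint of which heat edge is glued to which vertex, so that the interconnection map and hence the explicit $3\times3$ form of $P_2$ come out right; once that combinatorial dictionary is fixed, the asymptotics of $\coth$, $\tanh$ and $1/\sinh$ along $1+is$ and $is$ are elementary.
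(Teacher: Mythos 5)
Your proposal follows the paper's strategy almost verbatim for the structural claims: the same realisation of $(G_2,L_2,K_2)$ as an interconnection of three copies of the single-edge node from Lemma~\ref{lem:SingleHeatProps} via~\cite{AalMal13}, the same observation that $A_2$ decouples into three Dirichlet problems (hence uniform exponential stability), the same assembled $3\times3$ form $P_2(\gl)=RP(\gl)R^\top$ with two $p_1^k$'s on each diagonal entry and a single $p_2^k$ off the diagonal, and the same asymptotics of $\sqrt{\gl\gb_k}\coth\sqrt{\gl/\gb_k}$ and $\sqrt{\gl\gb_k}/\sinh\sqrt{\gl/\gb_k}$ along $\gl=1+is$ for the upper bound. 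Where you genuinely diverge is the coercivity estimate~\eqref{eq:Re_lb}. The paper works with the assembled matrix directly: it computes $q_j^k(s)=\R p_j^k(is)$ explicitly in terms of $a_k(s)=(|s|/(2\beta_k))^{1/2}$, shows $q_1^k(s)>0$ and $|q_1^k(s)|>|q_2^k(s)|$, and concludes that $\R P_2(is)$ is strictly diagonally dominant, hence positive definite, before passing to the asymptotic lower bound. You instead diagonalise each $2\times2$ block — the eigenvalues of $\R P^{(k)}(is)$ are $\R\bigl(\sqrt{is\gb_k}\tanh(\tfrac12\sqrt{is/\gb_k})\bigr)$ and $\R\bigl(\sqrt{is\gb_k}\coth(\tfrac12\sqrt{is/\gb_k})\bigr)$ — and then push the resulting bound through the compression $\R P_2(is)=R\bigl(\bigoplus_k\R P^{(k)}(is)\bigr)R^\top$, using (implicitly) that $RR^\top=2I_3$. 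This is a clean and arguably more structural route, and it correctly reduces a $3\times3$ positivity question to scalar estimates. The one point you should not leave as an assertion is the strict positivity of these eigenvalues on compact sets $\ep\le|s|\le R$: impedance passivity only yields $\R P^{(k)}(is)\ge0$, so you need the explicit identities, e.g.\ $\R\bigl(p_1^k(is)+p_2^k(is)\bigr)=\gb_k a_k(s)\,\frac{\sinh a_k(s)-\sin a_k(s)}{\cosh a_k(s)+\cos a_k(s)}>0$ and the analogous formula with $\coth$, to rule out a zero eigenvalue at finite $s\ne0$; once that is written down, continuity gives the uniform bound on compacts and your argument is complete.
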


\begin{proof}
We begin by considering a boundary node $(G,L,K)$ on $(\C^6,X_2,\C^6)$ defined by three separate heat equations, each of the form in Lemma~\ref{lem:SingleHeatProps}.
More precisely, let $L\colon D(L)\subseteq X_2\to X_2$ and $G,K\colon D(L)\to \C^6$ be defined by 
$$
L \begin{pmatrix}w_1\\w_2\\w_3 \end{pmatrix}= \begin{pmatrix}\gb_1 w_1''\\\gb_2 w_2''\\\gb_3 w_3''\end{pmatrix}, \quad G \begin{pmatrix}w_1\\w_2\\w_3\end{pmatrix} = \begin{pmatrix}w_1(0)\\w_1(1)\\w_2(0)\\w_2(1)\\w_3(0)\\w_3(1)\end{pmatrix}, \quad
K \begin{pmatrix}w_1\\w_2\\w_3\end{pmatrix} = 
\begin{pmatrix}-\gb_1 w_1'(0)\\\gb_1 w_1'(1)\\-\gb_2 w_2'(0)\\\gb_2 w_2'(1)\\-\gb_3 w_3'(0)\\\gb_3 w_3'(1)\end{pmatrix},
$$
respectively, for $(w_1,w_2,w_3)^\top\in D(L)=H^2(0,1)^3$. We note that we may recover our original heat network from the decoupled heat equations
 by choosing  three inputs $u_1,u_2,u_3$ and three outputs $y_1,y_2,y_3$  in a suitable way, namely $w_1(0)=w_3(1)=u_1$ and $y_1 = \gb_3 w_3'(1)-\gb_1 w_1'(0)$, $w_1(1)=w_2(0)=u_2$ and $y_2 = \gb_1 w_1'(1)-\gb_2 w_2'(0)$, and finally $w_2(1)=w_3(0)=u_3$ and $y_3 = \gb_2 w_2'(1)-\gb_3 w_3'(0)$.
Since these interconnections fit into the framework of~\cite{AalMal13}, the properties in Lemma~\ref{lem:SingleHeatProps} and~\cite[Thm.~3.3]{AalMal13} together with~\cite[Rem.~2.2]{NiPaSe24} imply that
 $(G_2,L_2,K_2)$ is an impedance passive boundary node on $(\C^3,X_2,\C^3)$. Furthermore, using the diagonal structure of the restriction $A_2$ of $L_2$ to $\Ker G_2$ it is easy to see that the semigroup generated by $A_2$ is uniformly exponentially stable.

It remains to establish bounds for the transfer function $P_2$ of the boundary node  $(G_2,L_2,K_2)$. Let $\gl\in\rho(A_2)$ and $u\in \C^3$. From the definition of the transfer function we have $P_2(\gl)u=K_2x,$ where $x\in D(L_2)$ is the unique solution of the abstract elliptic problem $(\gl-L_2)x=0$ subject to the boundary condition $G_2x=u$. From the properties of the interconnections described  in the previous paragraph it is clear that $x$ solves this problem if and only if it is the unique element of $D(L)$ solving the problem $(\gl-L)x=0$ subject to the boundary condition $Gx=R^\top u$, where
$$R=\begin{pmatrix}
1&0&0&0&0&1\\
0&1&1&0&0&0\\
0&0&0&1&1&0
\end{pmatrix}.
$$
Since $K_2x=R Kx$, we deduce that $P_2(\gl)u=R P(\gl)R^\top u$, and hence $P_2(\gl)=R P(\gl)R^\top$.
Now from Lemma~\ref{lem:SingleHeatProps} and the diagonal structure of $L$ it is clear that $P(\gl)=\diag(P^1(\gl),P^2(\gl),P^3(\gl))$ for $\gl\in\rho(A_2)$, where
$$
P^k(\gl) = \begin{pmatrix}p^k_1(\gl)&p^k_2(\gl)\\ p^k_2(\gl)& p^k_1(\gl)\end{pmatrix},\qquad \gl\in\overline{\C_+},
$$
with
$$
p_1^k(\gl) 
 = \frac{\sqrt{\gl \gb_k}}{\tanh\sqrt{\gl/\gb_k}}, 
\qquad p_2^k(\gl)
 = -\frac{\sqrt{\gl \gb_k}}{\sinh\sqrt{\gl/\gb_k}}
$$
for $\gl\in \overline{\C_+}\setminus \{ 0\}$ and $p_1^k(0)=-p_2^k(0)=\gb_k$ for $k=1,2,3$. A straightforward computation now gives
$$P_2(\gl)=\begin{pmatrix}
p^1_1(\gl)+p^3_1(\gl) &p^1_2(\gl)&p^3_2(\gl)\\
p^1_2(\gl)&p^1_1(\gl) + p^2_1(\gl) & p^2_2(\gl) \\
p^3_2(\gl) & p^2_2(\gl) & p^2_1(\gl) + p^3_1(\gl)
\end{pmatrix},\qquad \gl\in\overline{\C_+}.$$ From the formulas for $p_1^k$ and $p_2^k$ for $k=1,2,3$ it follows easily that $\|P_2(1+is)\|\lesssim1+|s|^{1/2}$ for $s\in\RR$. Finally we establish a lower bound for $\R P_2(is)$ when $s\in\RR\setminus\{0\}$. To this end, let us write $q^k_j(s)$ for $\R p_j^k(is)$, where $j=1,2$, $k=1,2,3$ and $s\in\RR$, so that
 $$\R P_2(is)=\begin{pmatrix}
q^1_1(s)+q^3_1(s) &q^1_2(s)&q^3_2(s)\\
q^1_2(s)&q^1_1(s) + q^2_1(s) & q^2_2(s) \\
q^3_2(s) & q^2_2(s) & q^2_1(s) + q^3_1(s)
\end{pmatrix},\qquad s\in\RR.
$$
Let $a_k(s)=(|s|/(2\beta_k))^{1/2}$ for $k=1,2,3$ and $s\in\RR$. Then 
$$\begin{aligned}
q_1^k(s)&=\gb_k a_k(s)\frac{\cosh a_k(s)\sinh a_k(s)+  \cos a_k(s)\sin a_k(s)}{\sinh^2a_k(s)+\sin^2a_k(s)},\\
q_2^k(s)&=-\gb_k a_k(s)\frac{\cos a_k(s)\sinh a_k(s)+  \cosh a_k(s)\sin a_k(s)}{\sinh^2a_k(s)+\sin^2a_k(s)}
\end{aligned}$$
for $k=1,2,3$ and $s\in\RR\setminus\{0\}$. We thus see that $q_1^k(s)>0$ for $k=1,2,3$ and $s\in\RR\setminus\{0\}$. A standard calculation shows that
$$
q_1^k(s)^2-q_2^k(s)^2
=\frac{\beta_k|s|}{2}\frac{\sinh^2 a_k(s)-\sin^2 a_k(s)}{\sinh^2 a_k(s)+\sin^2 a_k(s)},
$$
and in particular $|q_1^k(s)|>|q_2^k(s)|$ for $k=1,2,3$ and $s\in\RR\setminus\{0\}$. Hence the matrix $\R P_2(is)$ is strictly diagonally dominant, and in particular non-singular,  for all $s\in\RR\setminus\{0\}$. It follows that there exists a continuous map $\eta\colon\RR\setminus\{0\}\to(0,\infty)$ such that $\R P_2(is)\ge\eta(s)I$ for all $s\in\RR\setminus\{0\}$. Since $q_1^k(s)\gtrsim|s|^{1/2}$ as $|s|\to\infty$ while $|q_2^k(s)|\to0$ exponentially fast as $|s|\to\infty$ for $k=1,2,3$, we obtain the  lower bound in~\eqref{eq:Re_lb} for arbitrary $\ep>0$.
\end{proof}

\section{Energy decay}
\label{sec:energy}

In this final section we combine the results presented in 
Sections~\ref{sec:Prelim} and~\ref{sec:WHNetwork} in order to prove our main result.

\begin{proof}[Proof of Theorem~\ref{thm:WaveMultiHeat}]
We consider the abstract Cauchy problem corresponding to our wave-heat system as explained in Section~\ref{sec:WHNetwork}. 
The observations made in Section~\ref{sec:WaveNetwork} together with Proposition~\ref{prp:HeatNetAnalysis} imply that 
the boundary nodes
 $(G_1,L_1,K_1)$ and
 $(G_2,L_2,K_2)$ 
 satisfy
the conditions of Theorem~\ref{thm:res}
with $E=\RR\setminus (-\varepsilon,\varepsilon)$ for any $\varepsilon>0$.
 Therefore
%
the operator $A$ defined in Section~\ref{sec:WHNetwork} 
generates a contraction semigroup $\T$ on the space $X=X_1\times X_2$. Proposition~\ref{prp:HeatNetAnalysis} also tells us that $\|P_2(1+is)\|^2\lesssim 1+|s|$ for $s\in\RR$ and that, given any $\ep>0$, there exists $c_\ep$ such that $\R P_2(is)\ge c_\ep(1+|s|^{1/2})I$ for all $s\in\RR\setminus(-\ep,\ep)$. It follows from Theorem~\ref{thm:res} that $\{is:s\in\RR, |s|>\ep\}\subseteq\rho(A)$ and $\|(is-A)\inv\|\lesssim 1+|s|^{1/2}$ for $|s|>\ep$, for any given $\ep>0$. 
%
%
Next we establish that $0\in\rho(A)$.
We first note that since both $A_1$ and $A_2$ have compact resolvent, it follows from~\cite[Thm.~3.7]{NiPaSe24} 
that  $A$, too, has compact resolvent. It therefore suffices to show that $A$ is injective.
To this end, suppose that $x=(u_1,v_1,u_2,v_2,w_1,w_2,w_3)^\top\in \Ker A$. Then the functions $u_k,v_k$ are constant for $k=1,2$, and the functions $w_k$ are affine for $k=1,2,3$.
Moreover, the coupling conditions in $D(A)$ give 
\begin{align*}
v_1(0)=0, 
 \quad 
& v_1(1)=w_1(0) = w_3(1) , \quad
 u_1(1)-\gb_1 w_1'(0) + \gb_3 w_3'(1) =0,\\
 & w_1(1) = w_2(0) =  v_2(0) , \quad 
\gb_1 w_1'(1) - \gb_2 w_2'(0) - u_2(0) =0,\\
 & w_2(1) = w_3(0) =  v_2(1)  , \quad
\gb_2 w_2'(1) - \gb_3 w_3'(0) + u_2(1)=0. 
\end{align*}
Since $\gb_1,\gb_2,\gb_3>0$, it follows easily that $x=0$. Hence $A$ is injective, and therefore invertible.
By choosing $\ep>0$ in the previous part of the argument to be sufficiently small we may therefore deduce that $i\RR\subseteq\rho(A)$ with $\|(is-A)\inv\|\lesssim 1+|s|^{1/2}$ for $s\in\RR$.  Thus the semigroup $\T$ is strongly stable by an application of the theorem of Arendt, Batty, Lyubich and V\~{u}~\cite[Thm.~V.2.21]{EngNag00book}, and by~\cite[Thm.~2.4]{BorTom10} we obtain $\|T(t)z_0\|=o(t^{-2})$ as $t\to\infty$ for all $z_0\in D(A)$. Since the energy of a solution with initial data $z_0\in X$ is given by $E(t)=\frac12\|T(t)z_0\|^2$ for all $t\ge0$, the conclusions of Theorem~\ref{thm:WaveMultiHeat} follow immediately.
\end{proof}

\begin{rem}
\label{rem:AltBC}
We may  alternatively consider the wave-heat system with the Neumann boundary condition $y_x^1(t,0)=0$ for $t>0$ at the left end of the first wave equation in Figure~\ref{fig:WHnetworkMultiHeat}. The analysis is largely the same as in our case,  with the important difference that  the point $0$ becomes 
an eigenvalue
 of the operator $A$.
\end{rem}

\bibliographystyle{plain}

\end{document}